\newtheorem{thm}{Theorem}
\newtheorem{pbm}{Problem}
\newtheorem{col}{Corollary}
\newcommand{\tr}{\mathsf{T}}
\title{\LARGE \bf
Data-Driven Koopman Controller Synthesis\\ Based on the Extended 
$\mathcal{H}_2$ Norm Characterization
}
\author{Daisuke Uchida$^{1}$, Atsushi Yamashita$^{1}$,
	and Hajime Asama$^{1}$
\thanks{$^{1}$Department of Precision Engineering, 
		School of Engineering,
        The University of Tokyo, 
        Hongo 7-3-1, Bunkyo, Tokyo, Japan
        {\tt\small \{uchida, yamashita, asama\}@robot.t.u-tokyo.ac.jp}}%
}
\begin{document}

\maketitle
\thispagestyle{empty}
\pagestyle{empty}

\begin{abstract}
This paper presents a new data-driven controller synthesis based on the Koopman operator and the extended $\mathcal{H}_2$ norm characterization of discrete-time linear systems.
We model dynamical systems as polytope sets
which are
derived from multiple data-driven linear models obtained
by the finite approximation of the Koopman operator 
and then used to design robust feedback
controllers combined with the $\mathcal{H}_2$ norm characterization. 
The use of the $\mathcal{H}_2$ norm characterization is aimed to deal with the model uncertainty that arises due to the nature of the data-driven setting of the problem.
The effectiveness of the proposed controller synthesis is 
investigated through numerical simulations.
\end{abstract}

\section{INTRODUCTION}
The Koopman operator is an infinite-dimensional linear operator describing the evolution of so-called observable functions of underlying
dynamical systems.
Recently several data-driven techniques 
\cite{Applied_Koopmanism,Spectral_analysis_of_nonlinear_flows,schmid_2010,Tu_2014,Williams2015}
which estimate the spectral properties of the Koopman operator
have gained popularity in various fields.
The data-driven Koopman operator framework can lift a nonlinear system to a linear setting
in a data-driven manner. 
On the other hand the Koopman operator is infinite dimensional, and it is often necessary for engineering and scientific applications to approximate it
by a finite dimensional one. 
There has been considerable effort
to design appropriate observable functions that enable to numerically approximate the Koopman operator
\cite{Kernel-DMD,Learning_DNN_ACC2019,Dictionary_learning,Learning_Koopman_Invariant_Subspaces}.
These studies offered promising techniques to design observable functions using several useful concepts in other fields such as machine learning. Although they advanced the progress of the data-driven Koopman operator theory, it is still a grand challenge to design observable functions which have the high accuracy of prediction for a long period of time with moderate computational costs and complexity.

Design of observable functions is an important issue also for the Koopman operator-based controller design. It is necessary 
for better data-driven controllers 
to specify observable functions so that data-driven control models can reconstruct the behavior of underlying dynamical systems with high accuracy while remaining the moderate degrees and complexity.
While the Koopman operator theory was originally developed in the context of autonomous systems, it can be also
extended to non autonomous settings, where systems have inputs\cite{DMDc,EDMDc,EDMD_actuated_system}.
On the basis of these formulations,
problems on the data-driven controller design have been investigated by several studies\cite{Koopman_control_PLOS_ONE,KORDA_Koopman_MPC,Arbabi_Koopman_MPC_nonlinear_flows,PEITZ_switched_control,Peitz_interpolated_generator_MPC,Kaiser_eigenfunction_control}.
Promising mehods have been proposed to construct data-driven linear models
via the Koopman operator, which were followed by the model predictive control (MPC) design\cite{KORDA_Koopman_MPC,Arbabi_Koopman_MPC_nonlinear_flows}.
In \cite{PEITZ_switched_control} an alternative approach with low dimensional switched-systems was developed 
in order to reduce the data requirement, which was further extended to include continuous inputs by interpolation\cite{Peitz_interpolated_generator_MPC}.
Many of these studies are based on Extended Dynamic Mode Decomposition (EDMD)\cite{Williams2015} to obtain data-driven linear models. 
In addition to these efforts there is also another research direction, eigenfunction control\cite{Kaiser_eigenfunction_control},
where the Koopman eigenfunctions estimated from data are directly used to design controllers in a linear fashion.

In the data-driven Koopman controller synthesis, 
linear control models completely reconstruct the behavior of the underlying nonlinear
systems if we can design appropriate observable functions and collect a
sufficient amount of data required for approximating the Koopman operator.
However it is often the case in real situations that we
have difficulty finding such ideal observable functions
and face some limitation on collecting data, which leads to
the model uncertainty in our data-driven control models.

In this paper, we explicitly deal with this model uncertainty utilizing the extended $\mathcal{H}_2$ norm characterization of discrete-time linear time-invariant systems\cite{Extended_H2_H_inf}. It is shown that the linear robust control theory can be easily incorporated with the data-driven nonlinear systems control since the data-driven Koopman operator framework reduces underlying nonlinear systems to linear ones
in a data-driven manner. 
Specifically we form a polytope set as a control model
by utilizing multiple data-driven models obtained via the EDMD algorithm, and design a robust feedback controller based on the linear matrix inequality (LMI) conditions of the extended $\mathcal{H}_2$ norm.

This paper is organized as follows. In Section \ref{sec2} we outline the
data-driven Koopman operator theory in the context of controller synthesis. Section \ref{sec3} proposes a new controller synthesis based on the extended $\mathcal{H}_2$ norm condition of the data-driven model constructed in the previous section. Numerical examples are shown in Section \ref{sec4}.

\section{KOOPMAN OPERATOR FRAMEWORK FOR THE CONTROLLER SYNTHESIS}
\label{sec2}
\subsection{Koopman Operator for Systems with Inputs}
First we outline the data-driven Koopman operator theory for systems with input signals.
Consider a nonlinear dynamical system described as follows:
\begin{align}
	\dot{\bm{x}} = \bm{f}(\bm{x}, \bm{u}),\ \ \ 
	\bm{x}\in \mathcal{X}\subset \mathbb{R}^n,\ \ 
	\bm{u}\in \mathcal{U}\subset \mathbb{R}^p,
	\label{eq. continuous underlting system}
	\vspace*{-2mm}
\end{align}
where $\bm{x}$ and $\bm{u}$ denote the state and the input, respectively.
Discretizing the system with a fixed time interval $\Delta t$ leads to a discrete-time system:
\begin{align}
\label{eq. dicritized underlying system}
&\bm{x}_{k+1} = \bm{F}(\bm{x}_k,\bm{u}_k),\
\left(
\bm{x}_{k} \hspace{-1mm}=\hspace{-1mm} 
\bm{x}(k\Delta t),
\bm{u}_k \hspace{-1mm}=\hspace{-1mm} 
\bm{u}(k\Delta t)
\right).&
\end{align}

In this paper it is assumed that we have no explicit knowledge about
the underlying equations of the system (\eqref{eq. continuous underlting system} and \eqref{eq. dicritized underlying system}).
Instead a data-driven 
controller synthesis, which constructs a linear model from data, is developed by utilizing the Koopman operator.
Let $g$ denote an observable function in some function space $\mathcal{G}$ s.t.
\begin{align}
	g:\mathcal{X}\times 
	\mathcal{U}
	\rightarrow \mathbb{R},\ \ \ 
	g\in \mathcal{G}.
\end{align}

The Koopman operator $\mathcal{K}:\mathcal{G}\rightarrow \mathcal{G}$ corresponding to $\bm{F}$ is defined 
as follows:
\begin{align}
	&(\mathcal{K}g)(\bm{x}_k, \bm{u}_k) = 
	g(\bm{F}(\bm{x}_{k}),\bm{u}_{k+1})
	=
	g(\bm{x}_{k+1},\bm{u}_{k+1}).&
\end{align}

The physical and intuitive meanings of the Koopman operator may slightly change according to 
how the evolution of the input $\bm{u}_k$
is governed, e.g.
exogenous random variables, closed loop
signals, and so on\cite{EDMDc}.

Let $g_i\in \mathcal{G}$ for $i=1,\cdots N$.
We consider to approximate the infinite dimensional Koopman operator
$\mathcal{K}$ by the finite dimensional one
$\bm{\mathcal{K}}\in \mathbb{R}^{N\times N}$
that is derived from $g_i$:
\begin{align}
	&
	[
		\left( \mathcal{K}g_1 \right)(\bm{x}_k,\bm{u}_k)
		\cdots 
		\left( \mathcal{K}g_{N} \right)(\bm{x}_k,\bm{u}_k)
	]^\tr
	&\nonumber
\\ 
	&
	\hspace{33mm}
	=
	\bm{\mathcal{K}}\bm{g}
	(\bm{x}_k,\bm{u}_k)
	+
	\bm{r}(\bm{x}_k,\bm{u}_k),&
	\label{eq. residual 1}
\end{align}
where $\bm{r}(\bm{x}_k,\bm{u}_k)\in \mathbb{R}^N$ represents the residual and $\bm{g}$ denotes a vector-valued observable function
defined as:
\begin{align}
	\bm{g}(\bm{x}_k,\bm{u}_k):=&
	[ g_1(\bm{x}_k,\bm{u}_k) \cdots 
	g_{N}(\bm{x}_k,\bm{u}_k) ]^\mathsf{T}.
	&
\end{align}

We deal with a problem to control nonlinear systems via the observable functions $g_i$ and the finite dimensional Koopman operator $\bm{\mathcal{K}}$. A data-driven procedure is adopted to obtain linear models for controller synthesis. Observable functions 
may be either physical measurements (outputs), the state itself, or 
functions which were designed by users.

\subsection{Construction of Linear Models via Extended Dynamic Mode Decomposition}
In the context of the data-driven Koopman operator framework, our proposed controller synthesis 
constructs linear control models purely from data
in the same way developed in \cite{KORDA_Koopman_MPC}.
In order to obtain finite-dimensional linear systems
represented by matrices
we impose the linearity on inputs, i.e., observable functions are of the form:
\begin{align}
\bm{g}(\bm{x}_k, \bm{u}_k)=
[ g_1(\bm{x}_k)\cdots g_N(\bm{x}_k)\ \bm{u}_k^\tr  ]^\mathsf{T},
\end{align}
where $g_i:\mathcal{X}\rightarrow \mathbb{R}$ (for $i=1,\cdots,N$) denote observable functions defined on the state space $\mathcal{X}$. 
On the basis of this formulation a discrete-time linear model in the form:
\begin{align}
	\bm{g}(\bm{x}_{k+1}) =& \bm{A}\bm{g}(\bm{x}_k) + \bm{B}\bm{u}_k,&
	\label{eq. linear predictor}
\end{align}
where
$\bm{g}(\bm{x}_k):=[ g_1(\bm{x}_k) \cdots g_N(\bm{x}_k) ]^\mathsf{T},$
is sought as follows. 
First consider a data set:
\begin{align}
&
D:=\{ \bm{x}_k,\bm{u}_k \}_{k=1}^{M+1},
&
\\
&
\bm{x}_{k+1} = \bm{F}(\bm{x}_k, \bm{u}_k)
\ 
\text{for }k=1,\cdots,M.
&
\end{align}

Next define the matrices:
\begin{align}
	\bm{G}:=&[\bm{g}(\bm{x}_1)\cdots \bm{g}(\bm{x}_M)]
	\in \mathbb{R}^{N\times M},&
\end{align}
\begin{align}
	\bm{\hat{G}}:=&
	[\bm{g}(\bm{x}_2)\cdots \bm{g}(\bm{x}_{M+1})]
	\in \mathbb{R}^{N\times M},&
	\\
	\bm{U}:=&[ \bm{u}_1\cdots \bm{u}_M ]
	\in \mathbb{R}^{p\times M}.&
\end{align}

By these notations the observable functions are represented as follows:
\begin{align}
	[\bm{g}(\bm{x}_1,\bm{u}_1)
	\cdots \bm{g}(\bm{x}_M,\bm{u}_M)]
	=&
	\left[
		\begin{array}{c}
			\bm{G}
		\\
			\bm{U}
		\end{array}
	\right].&
\end{align}

Then the relationship corresponding to \eqref{eq. residual 1} is expressed as:
\begin{align}
	\bm{\hat{G}}=&
	\bm{\hat{\mathcal{K}}}
	\left[
	\begin{array}{c}
	\bm{G}
	\\
	\bm{U}
	\end{array}
	\right]
	+
	[\bm{r}(\bm{x}_1,\bm{u}_1)\cdots \bm{r}(\bm{x}_M,\bm{u}_M)],
	&
	\label{eq. residual 2}
\end{align}
where $\bm{\hat{\mathcal{K}}}\in \mathbb{R}^{N\times (N+p)}$.
Note that the last $p$ rows of $\bm{\mathcal{K}}$
in \eqref{eq. residual 1} are disregarded and instead $\bm{\hat{\mathcal{K}}}$ is introduced
since we are only interested in the prediction of the observable
functions $g_i$.
Minimizing the residual we seek a solution of a least-squares problem:
\begin{align}
	\underset{\bm{\hat{\mathcal{K}}}}{\text{min}}
	\left\| 
	[\bm{r}(\bm{x}_1,\bm{u}_1)\cdots \bm{r}(\bm{x}_M,\bm{u}_M)]
	\right\|_F^2.
\end{align}

The analytical expression of the solution to this problem can be described as follows:
\begin{align}
	[\bm{A}\ \bm{B}]:=
	\bm{\hat{\mathcal{K}}}=
	\bm{\hat{G}}
	\left[
		\begin{array}{c}
			\bm{G}
		\\
			\bm{U}
		\end{array}
	\right]^\dagger,
	\label{eq. linear predictor pseudo}
\end{align}
where $\dagger$ denotes pseudoinverse, and the matrices $\bm{A}$ and $\bm{B}$ correspond to those in \eqref{eq. linear predictor}.

\section{CONTOLLER SYNTHESIS USING THE $\mathcal{H}_2$ NORM CONDITION}
\label{sec3}
In the proposed method, we design a static feedback controller gain $\bm{S}$ s.t.
\begin{align}
	\label{eq. controller concept}
	&\bm{u}_{k} \hspace{-0.5mm}=\hspace{-0.5mm}
	 \bm{T}(\bm{x}_k) \hspace{-0.5mm}=\hspace{-0.5mm}
	 \bm{S}\bm{g}(\bm{x}_k),
	 \ 
	\left(
		\bm{T}\hspace{-0.5mm}:\hspace{-0.5mm}
		\mathcal{X}\rightarrow \mathcal{U},\ 
		\bm{S}\hspace{-0.5mm} \in \hspace{-0.5mm} 
		\mathbb{R}^{p\times N}
	\right).
	&
\end{align}

Note that the controller may have a nonlinear structure in terms of the original state $\bm{x}_k$ while it is a linear feedback controller in terms of the observable functions $\bm{g}(\bm{x}_k)$. We design controllers in the observable functions domain, which allows us to make use of the linear control theory
in nonlinear control problems.

\subsection{Model Uncertainty of the Data-Driven Control Models}
The predictive accuracy of the linear control models obtained
in Section \ref{sec2}
depends on the norm of the residual $\bm{r}$ in \eqref{eq. residual 2},
and the models capture the behavior of the underlying
systems with no errors if $\bm{r}\equiv \bm{0}$.
Nevertheless this does not hold in general due to the nature of the data-driven setting, i.e., it is difficult in real applications to satisfy the condition $\bm{r}\equiv \bm{0}$
by controlling several factors related to the data-driven procedure:
the choice of observable functions, the amount of collected 
data, and so on.
As a result the prediction error of the linear control model
\eqref{eq. linear predictor} may lower the performance of data-driven controllers.
In this paper we explicitly deal with this model uncertainty
by incorporating the parameter $\bm{\rho}$ into the system matrices:
\begin{align}
	\bm{g}(\bm{x}_{k+1}) = 
	\bm{A}(\bm{\rho})\bm{g}(\bm{x}_{k}) + \bm{B}(\bm{\rho})\bm{u}_k,
	\label{eq. nominal model without the disturbance}
\end{align}
where $\bm{A}(\bm{\rho})$ and $\bm{B}(\bm{\rho})$
depend on $\bm{\rho}$, which represents the variation of the property of the model due to
the variation of the factors related to the data-driven procedure in Section \ref{sec2}.
We treat the parameter $\bm{\rho}$
not explicitly but implicitly through data sets we collect.
As an example, suppose that we deal with a dynamical system and have a data set $D_1$. 
Using $D_1$ we can construct a data-driven model
$(\bm{A}_1,\bm{B}_1)$, to which
some value of $\bm{\rho}=\bm{\rho}_1$ corresponds, or
$\bm{A}_1=\bm{A}(\bm{\rho}_1),\ \bm{B}_1=\bm{B}(\bm{\rho}_1)$.
Then if we collect another data set $D_2$ another data-driven
model $(\bm{A}_2,\bm{B}_2)$, to which
some value of $\bm{\rho}=\bm{\rho}_2$ corresponds,
can be constructed, where
$\bm{A}_2=\bm{A}(\bm{\rho}_2),\ \bm{B}_2=\bm{B}(\bm{\rho}_2)$.
Note that in many cases $\bm{\rho_1}\neq \bm{\rho_2}$
due to the model uncertainty caused by the factors related to the data-driven setting.
In this way we implicitly deal with the parameter $\bm{\rho}$
by constructing multiple control models \eqref{eq. linear predictor} from multiple data sets. 
It is emphasized that we have no access
to the parameter $\bm{\rho}$ online, while the model \eqref{eq. nominal model without the disturbance} has an structure which is equivalent to linear parameter-varying (LPV) systems\cite{LPV_reference}, where the parameter $\bm{\rho}$
can usually be measured online. 
Also note that there is a study which aims to identify the Koopman operator
of \textit{unforced} systems in the presence of actuation
based on the concept of LPV-modeling\cite{Koopman_to_actuated_systems},
while the purpose of our proposed method is to design data-driven controllers.

In addition to the model uncertainty derived from the parameter $\bm{\rho}$,
we account for the effect of the disturbance $\bm{w}_k$ to the system
by introducing an additional term:
\begin{align}
\bm{g}(\bm{x}_{k+1}) = 
\bm{A}(\bm{\rho})\bm{g}(\bm{x}_{k}) + \bm{B}(\bm{\rho})\bm{u}_k
+
\bm{B}_w(\bm{\rho}) \bm{w}_k.
\label{eq. linear predictor 2}
\end{align}

If we have access to the trajectory of $\bm{w}_k$,
$\bm{B}_w(\bm{\rho})$ can also be estimated by the slight modification to the procedure in Section \ref{sec2}
\cite{KORDA_Koopman_MPC}. When we have no access to the
measurement of $\bm{w}_k$, other estimation
techniques\cite{Surana_2016_CDC,Surana_2016_IFAC} may be used to estimate $\bm{B}_w(\bm{\rho})$ from data.
For simplicity we set 
$\bm{B}_w(\bm{\rho})\equiv [1\cdots 1]^\tr$
in all numerical examples in Section \ref{sec4}.
\vspace{-2mm}
\subsection{Construction of Polytope Sets}
Using the representation \eqref{eq. linear predictor 2}
we model our system as a polytope set which consists of $N_p$ vertices:
\begin{align}
&\left[
\bm{A}(\bm{\rho})\ \bm{B}(\bm{\rho})\
\bm{B}_w(\bm{\rho})
\right]
=
\sum_{i=1}^{N_p} \alpha_i 
\left[
\bm{\tilde{A}}_i\ \bm{\tilde{B}}_i\
\bm{\tilde{B}}_{wi}
\right],
&
\label{eq. polytope set}
\end{align}
where $\alpha_i\geq 0$, $\sum_{i=1}^{N_p} \alpha_i = 1$. 
The main motivation for interpolating the system by a polytope set is that robust
controllers for nonlinear systems can be designed
by convex optimization on the basis of LMI conditions,
which is described in Section \ref{section controller synthesis}.
Specifically this polytope set is formed by multiple data-driven linear models \eqref{eq. linear predictor 2} as follows.
First we collect $N_D$ data sets $\{ D_l \}_{l=1}^{N_D}$, which are then used to
construct the same number of data-driven models
$\{(\bm{A}_l,\bm{B}_l,\bm{B}_{wl})\}_{l=1}^{N_D}$. 
The polytope set \eqref{eq. polytope set} is formed by searching for the maximum and minimum values of each entry of 
$\bm{A}_l$, $\bm{B}_l$, and $\bm{B}_{wl}$.
As a simple example suppose that we have 
$\bm{A}_l=(a_{ij}^l)\in \mathbb{R}^{2\times 2}$, where
$a_{11}^l\hspace{-1mm}\in\hspace{-1mm} [a_{\text{min}}, a_{\text{max}}]$,
$a_{\text{min}}\hspace{-1mm}:=\hspace{-2.5mm}\underset{1\leq l\leq N_D}{\text{min}}a_{11}^l$,
$a_{\text{max}}\hspace{-1mm}:=\hspace{-2.5mm}\underset{1\leq l\leq N_D}{\text{max}}a_{11}^l$,
$a_{12}^l\hspace{-1mm}\equiv\hspace{-1mm} b$,
$a_{21}^l\hspace{-1mm}\in\hspace{-1mm} [c_{\text{min}}, c_{\text{max}}]$, 
$c_{\text{min}}\hspace{-1mm}:=\hspace{-2.5mm}\underset{1\leq l\leq N_D}{\text{min}}a_{21}^l$,
$c_{\text{max}}\hspace{-1mm}:=\hspace{-2.5mm}\underset{1\leq l\leq N_D}{\text{max}}a_{21}^l$, and
$a_{22}^l\hspace{-1mm}\equiv\hspace{-1mm} d$.
Noticing that
\begin{align}
	&
	a_{11}^l\hspace{-1mm}\in\hspace{-1mm} [a_{\text{min}}, a_{\text{max}}],
	\ \
	a_{21}^l\hspace{-1mm}\in\hspace{-1mm} [c_{\text{min}}, c_{\text{max}}]
	&\nonumber
\\ 
	\Leftrightarrow\
	&
	\left\{
		\begin{array}{l}
			a_{11}^l=\beta_1a_{\text{min}}+\beta_2a_{\text{max}},
			\ \beta_i\geq 0,\ \beta_1+\beta_2=1,
		\\
			a_{21}^l=\gamma_1c_{\text{min}}+\gamma_2c_{\text{max}},
			\ \gamma_i\geq 0,\ \gamma_1+\gamma_2=1,
		\end{array}
	\right.
	&\nonumber
\end{align}
it is shown that
\begin{align}
	a_{11}^l&=
	\left(
		\beta_1a_{\text{min}}+\beta_2a_{\text{max}}
	\right)
	\left(
		 \gamma_1+\gamma_2
	\right)
	&\nonumber
\\
	&
	=\hspace{-0.8mm}
	\underset{=:\alpha_1}{\underline{\beta_1\gamma_1}}a_{\text{min}}
	\hspace{-0.5mm}+\hspace{-0.5mm}
	\underset{=:\alpha_2}{\underline{\beta_1\gamma_2}}a_{\text{min}}
	\hspace{-0.5mm}+\hspace{-0.5mm}
	\underset{=:\alpha_3}{\underline{\beta_2\gamma_1}}a_{\text{max}}
	\hspace{-0.5mm}+\hspace{-0.5mm}
	\underset{=:\alpha_4}{\underline{\beta_2\gamma_2}}a_{\text{max}}
	&\nonumber
\\ 
	&
	=
	\alpha_1a_{\text{min}}+
	\alpha_2a_{\text{min}}+
	\alpha_3a_{\text{max}}+
	\alpha_4a_{\text{max}},
	&
\\
	a_{21}^l&=
	\alpha_1c_{\text{min}}+
	\alpha_2c_{\text{min}}+
	\alpha_3c_{\text{max}}+
	\alpha_4c_{\text{max}},
	&
\\
	\alpha_1&+\alpha_2+\alpha_3+\alpha_4
	=
	\left(
	\beta_1+\beta_2
	\right)
	\left(
	\gamma_1+\gamma_2
	\right)
	=
	1.&
\end{align}
$\therefore$ A polytope set is derived as:
\begin{align}
&
	\bm{A}(\bm{\rho})
	=
	\left[
		\begin{array}{lr}
		\rho_1&b
		\\
		\rho_2&d
		\end{array}
	\right]
	=\sum_{i=1}^{4}\alpha_i \bm{\tilde{A}}_i,
	\ \ 
	\bm{\tilde{A}}_1 \hspace{-1mm} := \hspace{-1mm}
	\left[
	\begin{array}{lr}
	\hspace{-1.5mm}a_{\text{max}}\hspace{-1.5mm} & \hspace{-1.5mm}b\hspace{-1.5mm}
	\\
	\hspace{-1.5mm}c_{\text{max}}\hspace{-1.5mm} & \hspace{-1.5mm}d\hspace{-1.5mm}
	\end{array}
	\right],
	&\nonumber
\\
	&
	\bm{\tilde{A}}_2 \hspace{-1mm} := \hspace{-1mm}
	\left[
	\begin{array}{lr}
	\hspace{-1.5mm}a_{\text{max}}\hspace{-1.5mm} & b\hspace{-1.5mm}
	\\
	\hspace{-1.5mm}c_{\text{min}}\hspace{-1.5mm} & d\hspace{-1.5mm}
	\end{array}
	\right]\hspace{-1mm},
	\bm{\tilde{A}}_3 \hspace{-1mm} := \hspace{-1mm}
	\left[
	\begin{array}{lr}
	\hspace{-1.5mm}a_{\text{min}}\hspace{-1.5mm} & b\hspace{-1.5mm}
	\\
	\hspace{-1.5mm}c_{\text{min}}\hspace{-1.5mm} & d\hspace{-1.5mm}
	\end{array}
	\right]\hspace{-1mm},
	\bm{\tilde{A}}_4:=
	\left[
	\begin{array}{lr}
	\hspace{-1.5mm}a_{\text{min}}\hspace{-1.5mm} & b\hspace{-1.5mm}
	\\
	\hspace{-1.5mm}c_{\text{max}}\hspace{-1.5mm} & d\hspace{-1.5mm}
	\end{array}
	\right]\hspace{-1mm},
	&
\end{align}
which means that the model
represents all the plants in a rectangle in the parameter
space as shown in Fig.~\ref{fig. rectangle}.
\begin{figure}[t]
	\centering
	\includegraphics[width=6cm]{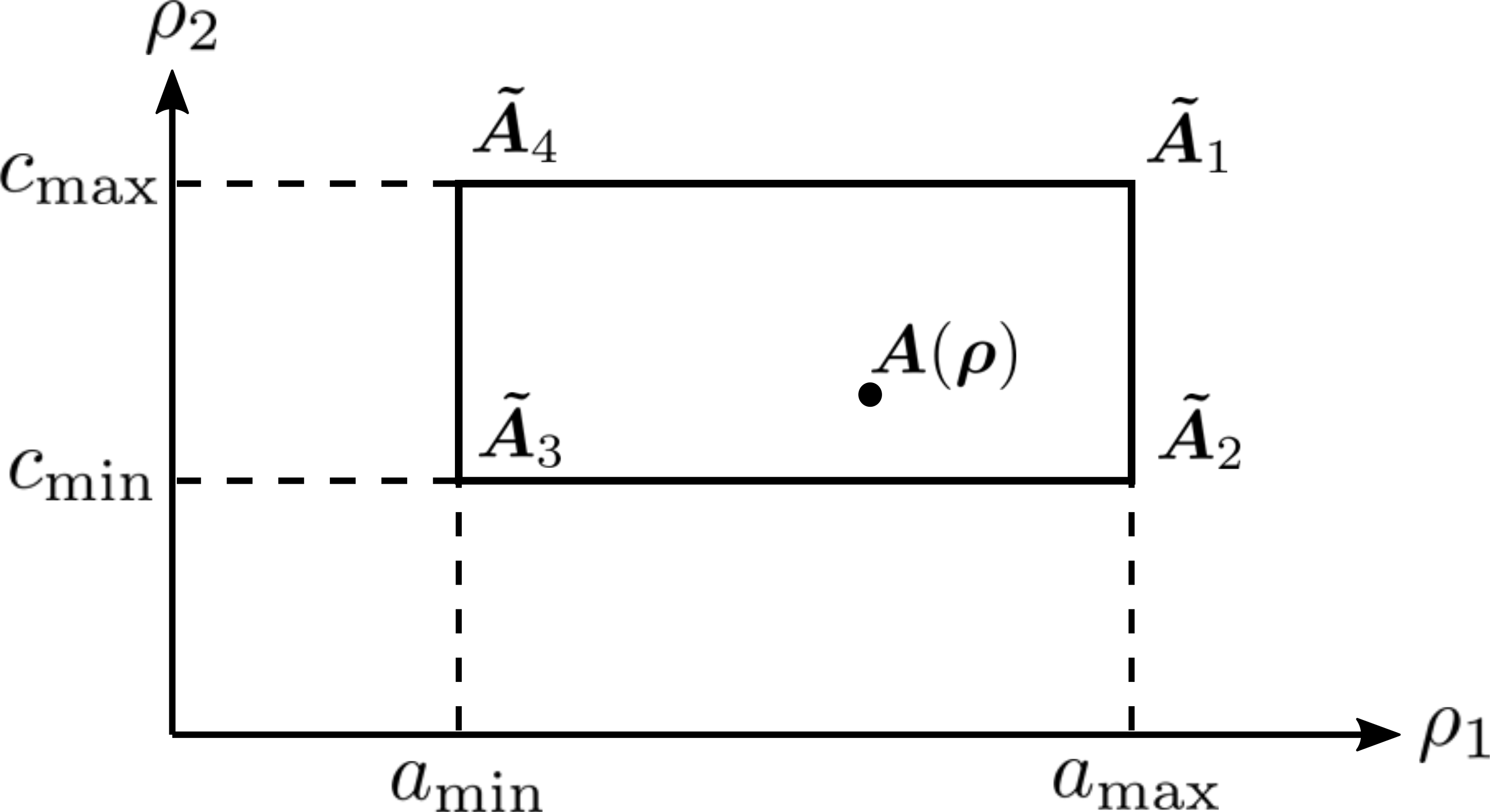}
	\caption{Polytope set represented in the parameter space.}
	\label{fig. rectangle}
\end{figure}
Other matrices $\bm{\tilde{B}}_l$ and $\bm{\tilde{B}}_{wl}$
are formed in the same way.
Note that it is important to use a sufficient number of observable functions
with appropriate design to avoid ill-conditioning which leads to an extremely
large size of the polytope.
\vspace{-2mm}

\subsection{Construction with a Threshold Algorithm}
In a practical point of view the construction of a polytope set
\eqref{eq. polytope set} may require prohibitively high
computational storage since it is necessary to evaluate
$2^{N^2}$ vertices.
Thus in order to construct a polytope set
with a moderate number of vertices we apply a threshold algorithm described in Algorithm \ref{alg. threshold}. We generate $2^h$ vertices considering not all entries but only $h$ entries whose variations within the data sets are larger than others, which
are fixed to the mean values.

\subsection{Extended $\mathcal{H}_2$ Norm Characterization}
	With respect to the data-driven model \eqref{eq. linear predictor 2}, we define the controlled output
	$\bm{z}_k\in \mathbb{R}^d$
	as follows:
	\begin{align}
	\bm{z}_k
	=
	\bm{C}_z
	\bm{g}(\bm{x}_k)
	+
	\bm{D}_{zu} \bm{u}_k.
	\label{eq. controlled output}
	\end{align}
    In the proposed controller synthesis, the cost function $J_{\text{obj}}$ to be minimized
    is defined as:
	\begin{align}
		J_{\text{obj}}:=\sum_{k=1}^{\infty} \bm{z}_k^\tr \bm{z}_k,
	\end{align}
	whose expected value is equivalent to the $\mathcal{H}_2$ norm of the model provided $\bm{w}_k$ is the white noise.
Then the extended $\mathcal{H}_2$ norm characterization of
the generalized plant 
(\eqref{eq. linear predictor 2}, \eqref{eq. controlled output}) is described as follows\cite{Extended_H2_H_inf}.
\begin{thm}
	\label{thm. 1}
	\begin{align}
		&
		\hspace{7mm}
		\left\|  \mathcal{H}_{wz} \right\| _2^2 < \mu 
		\ \ \Leftrightarrow\ \ 
		\exists 
		\bm{P},\,\bm{W},\,
		\bm{X},\,
		\bm{L}\ 
		\text{s.t.}
		&\nonumber
	\\
		&
		\text{tr}(\bm{W})<\mu,
		&
	\\
		&
		\bm{M}_1 \hspace{-0.5mm}:=\hspace{-0.5mm}
		\left[
			\begin{array}{cc}
				\bm{W}&\bm{C}_z\bm{X}+\bm{D}_{zu}\bm{L}
			\\
				*&\bm{X}+\bm{X}^\tr -\bm{P}
			\end{array}
		\right]>0,
		&
	\\
		&
		\bm{M}_2 \hspace{-0.5mm}:=\hspace{-0.5mm}
		\left[
		\begin{array}{ccc}
			\bm{P}&\bm{A}(\bm{\rho})\bm{X}+\bm{B}(\bm{\rho})\bm{L} & \bm{B}_w(\bm{\rho})
		\\
			*&\bm{X}+\bm{X}^\tr -\bm{P} & \bm{0}
		\\
			*&*&\bm{I}
		\end{array}
		\right]>0,
		&
	\end{align}
	where $\bm{P}$ and $\bm{W}$ are symmetric, and $\left\|  \mathcal{H}_{wz} \right\| _2$ denotes
	the $\mathcal{H}_2$ norm from $\bm{w}_k$ to $\bm{z}_k$.
\end{thm}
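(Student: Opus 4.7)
The plan is to recognize Theorem~\ref{thm. 1} as a slack-variable reformulation of the standard discrete-time $\mathcal{H}_2$ Lyapunov LMI (of de Oliveira--Geromel--Bernussou type) applied to the closed-loop plant obtained by substituting $\bm{u}_k=\bm{S}\bm{g}(\bm{x}_k)$, with the change of variables $\bm{L}=\bm{S}\bm{X}$. The work therefore splits into two implications, and the backbone of each is Schur complement combined with the inequality
\begin{align}
\bm{X}^{\tr}\bm{P}^{-1}\bm{X}\,\ge\,\bm{X}+\bm{X}^{\tr}-\bm{P},\qquad \bm{P}>0,
\end{align}
which follows from $(\bm{X}-\bm{P})^{\tr}\bm{P}^{-1}(\bm{X}-\bm{P})\ge 0$.

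First, for the implication ($\Leftarrow$), I would argue that $\bm{M}_1>0$ forces $\bm{X}+\bm{X}^{\tr}-\bm{P}>0$, so $\bm{X}$ is nonsingular and $\bm{P}>0$. Define $\bm{S}:=\bm{L}\bm{X}^{-1}$, i.e.\ the candidate feedback gain. Taking the Schur complement of $\bm{M}_1>0$ with respect to the lower-right block and applying the displayed inequality (which reverses under inversion since $0<\bm{X}+\bm{X}^{\tr}-\bm{P}\le\bm{X}^{\tr}\bm{P}^{-1}\bm{X}$) gives
\begin{align}
\bm{W}\,>\,(\bm{C}_z+\bm{D}_{zu}\bm{S})\,\bm{P}\,(\bm{C}_z+\bm{D}_{zu}\bm{S})^{\tr}.
\end{align}
Doing the same reduction on $\bm{M}_2>0$ (eliminate the disturbance block by Schur complement against $\bm{I}$, then the slack block) yields the closed-loop Lyapunov inequality
\begin{align}
\bm{P}-(\bm{A}(\bm{\rho})+\bm{B}(\bm{\rho})\bm{S})\,\bm{P}\,(\bm{A}(\bm{\rho})+\bm{B}(\bm{\rho})\bm{S})^{\tr}-\bm{B}_w(\bm{\rho})\bm{B}_w(\bm{\rho})^{\tr}>0.
\end{align}
Together with $\mathrm{tr}(\bm{W})<\mu$, these are the textbook sufficient conditions for $\|\mathcal{H}_{wz}\|_2^2<\mu$, so the bound follows from the standard controllability-Gramian characterization of the discrete-time $\mathcal{H}_2$ norm.

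For the converse ($\Rightarrow$), assume $\|\mathcal{H}_{wz}\|_2^2<\mu$; then there exist a symmetric $\bm{P}>0$ and $\bm{W}$ satisfying the two standard $\mathcal{H}_2$ inequalities above together with $\mathrm{tr}(\bm{W})<\mu$. I would simply set $\bm{X}:=\bm{P}$ and $\bm{L}:=\bm{S}\bm{P}$; then $\bm{X}+\bm{X}^{\tr}-\bm{P}=\bm{P}$, and the blocks of $\bm{M}_1$ and $\bm{M}_2$ collapse exactly to the classical $\mathcal{H}_2$ LMIs, whose positive definiteness is equivalent (via Schur complement in the opposite direction) to what we assumed.

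The main technical obstacle is not the equivalence itself but the bookkeeping of the Schur complement chains on $\bm{M}_2$, since two blocks must be eliminated in sequence and one must keep track of symmetry of $\bm{P}$ and invertibility of $\bm{X}$; care is also needed because the slack-variable inequality only produces a \emph{sufficient} bound in the forward direction, which is why the converse must artificially restrict to $\bm{X}=\bm{P}$ rather than exhibit a general $\bm{X}$. Since the result is quoted from \cite{Extended_H2_H_inf}, a fully rigorous proof can be omitted in the paper; what matters for the sequel is that the LMIs are affine in $(\bm{P},\bm{W},\bm{X},\bm{L})$ and thus amenable to the polytopic robust synthesis in \eqref{eq. polytope set}.
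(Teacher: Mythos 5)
Your proposal is correct and follows the standard slack-variable (de Oliveira--Geromel--Bernussou) argument that the paper itself relies on: the paper states Theorem~\ref{thm. 1} without proof, citing \cite{Extended_H2_H_inf}, and your Schur-complement chains combined with $\bm{X}^{\tr}\bm{P}^{-1}\bm{X}\ge\bm{X}+\bm{X}^{\tr}-\bm{P}$ for sufficiency and the specialization $\bm{X}=\bm{P}$, $\bm{L}=\bm{S}\bm{P}$ for necessity are exactly the argument given in that reference. The only bookkeeping slip is that $\bm{P}>0$ is obtained from the $(1,1)$ block of $\bm{M}_2>0$ rather than from $\bm{M}_1>0$ alone (which only yields $\bm{X}+\bm{X}^{\tr}>\bm{P}$); this does not affect the proof.
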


With variables satisfying this theorem, a static feedback controller of the form \eqref{eq. controller concept}
that guarantees the stability of the closed system and satisfies $\left\|  \mathcal{H}_{wz} \right\| _2^2 < \mu $ can be obtained as 
$\bm{S}=\bm{L}\bm{X}^{-1}$
\cite{Extended_H2_H_inf}.
\begin{algorithm}[t]
	\caption{Construction of a polytope set}
	\label{alg. threshold}
	\begin{algorithmic}
		\Require  
		Matrices $\{\bm{A}_l\}_{l=1}^{N_D}$ 
		and the threshold number $h\in \mathbb{N}$
		\Ensure A set $\{ \bm{\tilde{A}}_i \}_{i=1}^{2^h}$ of a polytope set
		\State $\bm{A}_{\text{max}}(i,j)\leftarrow 
		\underset{1\leq l\leq N_D}{\text{max}}[\bm{A}_l(i,j)]$ for $\forall$ $i,j$
		\State $\bm{A}_{\text{min}}(i,j)\leftarrow 
		\underset{1\leq l\leq N_D}{\text{min}}[\bm{A}_l(i,j)]$ for $\forall$ $i,j$
		\State $\bm{A}_{\text{mean}}(i,j)\leftarrow
		\frac{1}{N_D}\sum_{l=1}^{N_D}
		\bm{A}_l(i,j)$ for $\forall$ $i,j$
		\State $\bm{\tilde{A}}_1\leftarrow
		\bm{A}_{\text{mean}}$
		\State $\bm{\tilde{A}}_2\leftarrow
		\bm{A}_{\text{mean}}$
		\For{$p=1:h$} \vspace{2mm}
		\State \vspace{2mm}
		$(s,t)\leftarrow$ index of the
		$p$-th largest entry of $\bm{A}_{\text{max}}-\bm{A}_{\text{min}}$
		\For {$k=1:2^{p-1}$}
		\State $\bm{\tilde{A}}_{2^{p-1}+k}\leftarrow
		\bm{\tilde{A}}_k$
		\State $\bm{\tilde{A}}_k(s,t)\leftarrow
		\bm{A}_{\text{max}}(s,t)$
		\State $\bm{\tilde{A}}_{2^{p-1}+k}(s,t)\leftarrow
		\bm{A}_{\text{min}}(s,t)$
		\EndFor
		\EndFor
	\end{algorithmic}
\end{algorithm}

\subsection{Robust Controller Synthesis} 
\label{section controller synthesis}
Using the vertex matrices 
$\{ \bm{\tilde{A}}_i,\bm{\tilde{B}}_i,\bm{\tilde{B}}_{wi} \}$
of the polytope set \eqref{eq. polytope set}
and the controlled output \eqref{eq. controlled output}, 
we solve the following problem to design a robust controller
that accounts for the model uncertainty due to the
the data-driven procedure.
\begin{pbm}
	\rm{(Proposed Controller Synthesis)}
	\ \\
	Given vertex matrices $\{ \bm{\tilde{A}}_i,\bm{\tilde{B}}_i,\bm{\tilde{B}}_{wi} \}_{i=1}^{2^h}$, $\bm{C}_z$, and $\bm{D}_{zu}$,
	solve the following problem:
	\begin{align}
	&
	\text{inf tr}(\bm{W}_i)
	\ \text{  subject to }
	&\nonumber
	\\
	&
	\bm{M}_1 
	(\bm{P},\bm{W},\bm{X},\bm{L};\
	\bm{C}_z, \bm{D}_{zu}
	)
	>0,
	&\nonumber
	\\
	&
	\bm{M}_2(\bm{P},\bm{X},\bm{L};\ 
	\bm{\tilde{A}}_i,\bm{\tilde{B}}_i,\bm{\tilde{B}}_{wi}) 
	>0,\ \ \ 
	\text{for }i=1,\cdots,2^h,
	&\nonumber
	\end{align}
	and define 
	$J_{\text{syn}}:=\underset{1\leq i\leq 2^h}{\text{max}}\{ \text{tr}(\bm{W}_i) \}$
	and
	$\bm{S}:=\bm{L}\bm{X}^{-1}$.
	\label{pbm. 1}
\end{pbm}

We also define the following problem for evaluating the bound $J(\bm{\rho})$
for the $\mathcal{H}_2$ norm.
\begin{pbm}
	\label{pbm. analysis}
	\rm{(Evaluation of the $\mathcal{H}_2$ norm)}
	\ \\
	Given system matrices $\bm{A}(\bm{\rho})$, $\bm{B}(\bm{\rho})$,
	$\bm{B}_w(\bm{\rho})$, $\bm{C}_z$, $\bm{D}_{zu}$, and a feedback gain $\bm{S}$, solve the following problem:
	\begin{align}
	&
	\text{inf tr}(\bm{W}) 
	\ \text{  subject to }
	&
	\\
	& 
	\bm{M}_3
	\hspace{-0.5mm}:=\hspace{-0.5mm}
	\left[
	\begin{array}{cc}
	\bm{W}&
	\left(
	\bm{C}_z+\bm{D}_{zu}\bm{S}
	\right)\bm{X}
	\\
	*&\bm{X}+\bm{X}^\tr -\bm{P}
	\end{array}
	\right]>0,
	&
	\\
	&
	\bm{M}_4
	\hspace{-0.5mm}:=\hspace{-0.5mm}
	\left[
	\begin{array}{ccc}
	\bm{P}&
	\left(
	\bm{A}(\bm{\rho})+\bm{B}(\bm{\rho})\bm{S}
	\right)\bm{X} 
	& \bm{B}_w(\bm{\rho})
	\\
	*&\bm{X}+\bm{X}^\tr -\bm{P} & \bm{0}
	\\
	*&*&\bm{I}
	\end{array}
	\right]>0,
	&
	\end{align}
	and define $J(\bm{\rho}):=\text{tr}(\bm{W})$.
\end{pbm}
\begin{col}
	\label{col. 2}
	\ \\
	\rm{}
	If we solve Problem \ref{pbm. analysis} using a feedback gain $\bm{S}$ determined by Problem \ref{pbm. 1}, then
	$J(\bm{\rho})\leq J_{\text{syn}}$ for $\forall \bm{\rho}$.
\end{col}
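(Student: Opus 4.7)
The plan is to show that any optimizer of Problem~\ref{pbm. 1} yields a feasible point of Problem~\ref{pbm. analysis} whose objective value is at most $J_{\text{syn}}$, thereby bounding $J(\bm{\rho})$ from above. Two observations drive the argument: an algebraic identification of the two LMI systems through the substitution $\bm{L}=\bm{S}\bm{X}$, and convexity of the vertex LMIs with respect to the polytopic uncertainty.

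First, I would fix an optimizer $(\bm{P}^*,\bm{X}^*,\bm{L}^*,\{\bm{W}_i^*\})$ of Problem~\ref{pbm. 1} and set $\bm{S}=\bm{L}^*(\bm{X}^*)^{-1}$, which is exactly the gain being evaluated in Problem~\ref{pbm. analysis}. Using $\bm{L}^*=\bm{S}\bm{X}^*$, the identities $\bm{C}_z\bm{X}^*+\bm{D}_{zu}\bm{L}^*=(\bm{C}_z+\bm{D}_{zu}\bm{S})\bm{X}^*$ and $\bm{\tilde{A}}_i\bm{X}^*+\bm{\tilde{B}}_i\bm{L}^*=(\bm{\tilde{A}}_i+\bm{\tilde{B}}_i\bm{S})\bm{X}^*$ hold, so the $\bm{M}_1$ LMI of Problem~\ref{pbm. 1} coincides with the $\bm{M}_3$ LMI of Problem~\ref{pbm. analysis}, and the $\bm{M}_2$ LMI at vertex $i$ coincides with $\bm{M}_4$ evaluated at the same vertex. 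Picking an index $i^*$ that attains the maximum in the definition of $J_{\text{syn}}$ and taking $\bm{W}=\bm{W}_{i^*}^*$ then makes $\bm{M}_3>0$ with $\text{tr}(\bm{W})=J_{\text{syn}}$.

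Next, for an arbitrary $\bm{\rho}$, I would invoke \eqref{eq. polytope set} to obtain weights $\alpha_i\geq 0$ with $\sum_i\alpha_i=1$ such that $[\bm{A}(\bm{\rho})\ \bm{B}(\bm{\rho})\ \bm{B}_w(\bm{\rho})]=\sum_i\alpha_i[\bm{\tilde{A}}_i\ \bm{\tilde{B}}_i\ \bm{\tilde{B}}_{wi}]$. The blocks of $\bm{M}_4$ that depend on $\bm{\rho}$ are affine in $(\bm{A}(\bm{\rho}),\bm{B}(\bm{\rho}),\bm{B}_w(\bm{\rho}))$, and the $\bm{\rho}$-independent blocks $\bm{P}^*$, $\bm{X}^*+\bm{X}^{*\tr}-\bm{P}^*$, $\bm{I}$, and $\bm{0}$ are reproduced under convex combination because $\sum_i\alpha_i=1$. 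Hence $\bm{M}_4=\sum_i\alpha_i\bm{M}_2^{(i)}$, where $\bm{M}_2^{(i)}$ denotes $\bm{M}_2$ at the $i$-th vertex; since each $\bm{M}_2^{(i)}>0$, the convex combination is strictly positive definite as well.

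Combining the two steps shows that $(\bm{P}^*,\bm{W}_{i^*}^*,\bm{X}^*)$ is feasible for Problem~\ref{pbm. analysis}, whence $J(\bm{\rho})\leq\text{tr}(\bm{W}_{i^*}^*)=J_{\text{syn}}$ as claimed. The only delicate point I expect is notational, namely making precise that the variables $\bm{P},\bm{X},\bm{L}$ in Problem~\ref{pbm. 1} are shared across all vertex LMIs so that $\bm{S}$ is unambiguously defined; the core fact that strict positive definiteness is preserved under convex combinations is immediate because the $\alpha_i$ form a probability vector.
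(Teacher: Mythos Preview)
Your argument is correct and is precisely the standard convexity argument one expects here: identify $\bm{M}_1$ with $\bm{M}_3$ and $\bm{M}_2^{(i)}$ with $\bm{M}_4$ at the vertices via $\bm{L}^*=\bm{S}\bm{X}^*$, then take the convex combination \eqref{eq. polytope set} to obtain $\bm{M}_4>0$ for every $\bm{\rho}$ in the polytope. The paper itself omits the proof (``easy \ldots omitted due to lack of space''), so there is no alternative approach to compare against; what you wrote is exactly what the authors had in mind. The only cosmetic point is that Problem~\ref{pbm. 1} as stated carries a single $\bm{M}_1$ constraint with a single $\bm{W}$ (the subscript in ``$\text{tr}(\bm{W}_i)$'' appears to be a typo), so you can simplify by taking $\bm{W}=\bm{W}^*$ directly rather than selecting an index $i^*$; either reading leads to the same bound.
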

\begin{proof}
	It is easy to prove the proposition, and the proof is omitted
	due to lack of space.
\end{proof}

By Corollary 1 it is confirmed that the proposed controller guarantees that
for all the plants in the polytope set \eqref{eq. polytope set}
the $\mathcal{H}_2$ norm is bounded by $J_{\text{syn}}$.
\vspace{-2mm}

\subsection{Relation Between the Model Uncertainty and the $\mathcal{H}_2$ Norm}
We can relate the model uncertainty of the model, which corresponds to
the parameter $\bm{\rho}$, and the bound for the $\mathcal{H}_2$ norm as follows.
\begin{thm}
	\ \\
	Let $\bm{A}(\bm{\rho})$, $\bm{B}(\bm{\rho})$, and
	$\bm{B}_w(\bm{\rho})$ be system matrices. 
	Consider two sets $\mathcal{O}:=\{ \bm{\rho} \}$ and 
	$\mathcal{O}':=\{ \bm{\rho}' \}$ of parameters, where
	$J_{\text{syn}}$ and $J_{\text{syn}}'$ denote
	the corresponding bounds for the $\mathcal{H}_2$ norm determined by Problem \ref{pbm. 1}, respectively.
	If $\mathcal{O}\subseteq \mathcal{O}'$, then 
	$J_{\text{syn}}\leq J_{\text{syn}}'$.
\end{thm}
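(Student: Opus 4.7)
The plan is to prove monotonicity of $J_{\text{syn}}$ via a feasibility-transfer argument. I would show that any tuple $(\bm{P},\bm{W},\bm{X},\bm{L})$ that is feasible for the instance of Problem \ref{pbm. 1} associated with $\mathcal{O}'$ is also feasible for the instance associated with $\mathcal{O}$. Since the two instances share the same objective, and since $\bm{M}_1$ depends only on the ($\bm{\rho}$-independent) matrices $\bm{C}_z,\bm{D}_{zu}$, this containment of feasible sets immediately yields $J_{\text{syn}}\leq J_{\text{syn}}'$.

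The key steps are as follows. First, I would observe that $\mathcal{O}\subseteq \mathcal{O}'$ implies the polytope generated by $\mathcal{O}$ via \eqref{eq. polytope set} is contained in the polytope generated by $\mathcal{O}'$, because every realized triple $[\bm{A}(\bm{\rho}),\bm{B}(\bm{\rho}),\bm{B}_w(\bm{\rho})]$ over $\mathcal{O}$ is also realized over $\mathcal{O}'$, so the per-entry intervals from which the vertices are built are nested. Second, each vertex $[\bm{\tilde{A}}_i,\bm{\tilde{B}}_i,\bm{\tilde{B}}_{wi}]$ of the smaller polytope therefore lies in the larger polytope and can be expressed as a convex combination $\sum_j \lambda_{ij}[\bm{\tilde{A}}_j',\bm{\tilde{B}}_j',\bm{\tilde{B}}_{wj}']$ of vertices of the larger one, with $\lambda_{ij}\geq 0$ and $\sum_j \lambda_{ij}=1$. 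Third, with the decision variables held fixed, $\bm{M}_2$ is affine in $(\bm{A}(\bm{\rho}),\bm{B}(\bm{\rho}),\bm{B}_w(\bm{\rho}))$, and the cone of positive definite matrices is convex, so positivity of $\bm{M}_2$ at every vertex of the $\mathcal{O}'$-polytope propagates via the same convex combination to positivity at every vertex of the $\mathcal{O}$-polytope. Since $\bm{M}_1$ and the trace objective are unchanged, feasibility transfers and the desired bound follows.

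The main subtlety will be ensuring monotonicity of the polytope construction itself when the threshold Algorithm \ref{alg. threshold} is employed: fixing low-variation entries to their mean values means the constructed polytope need not literally contain every realized triple, so one either restricts attention to the full max--min construction in which monotonicity is automatic, or one must argue that the same index selection is used for both $\mathcal{O}$ and $\mathcal{O}'$ so that the selected per-entry intervals on $\mathcal{O}$ are sub-intervals of those on $\mathcal{O}'$. Beyond this bookkeeping, the LMI-level reasoning is a routine convex-combination manipulation and presents no real difficulty.
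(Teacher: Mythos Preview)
Your feasibility-transfer argument is correct and is the most direct route to the monotonicity claim. The paper proceeds differently: instead of comparing the feasible sets of the two synthesis problems, it detours through the analysis problem (Problem~\ref{pbm. analysis}) and Corollary~\ref{col. 2}. Concretely, the paper argues that (i) every $\bm{\rho}\in\mathcal{O}$ lies in the $\mathcal{O}'$-polytope, so Corollary~\ref{col. 2} applied to the $\mathcal{O}'$ synthesis yields $J(\bm{\rho})\leq J_{\text{syn}}'$; and (ii) some vertex $\bm{\rho}_I$ of the $\mathcal{O}$-polytope satisfies $J(\bm{\rho}_I)=J_{\text{syn}}$, whence $J_{\text{syn}}=J(\bm{\rho}_I)\leq J_{\text{syn}}'$.

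Your approach buys two things. First, it never leaves the synthesis LMIs: once polytope containment is established, affinity of $\bm{M}_2$ in the system matrices and convexity of the positive-definite cone give the result in one line, with no appeal to Problem~\ref{pbm. analysis}. Second, the paper's step (ii) tacitly asserts that the synthesis optimum $J_{\text{syn}}$ is \emph{attained} as an analysis value $J(\bm{\rho}_I)$ at some vertex, and that the $J(\cdot)$ appearing in (i) and (ii) are computed with compatible feedback gains; Corollary~\ref{col. 2} as stated only gives the inequality $J(\bm{\rho})\leq J_{\text{syn}}$, not the equality, so this identification needs more justification than the paper provides. Your argument sidesteps the issue entirely. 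The caveat you raise about Algorithm~\ref{alg. threshold} is apt and applies equally to the paper's proof, which likewise presumes the full box construction so that polytope nesting follows from $\mathcal{O}\subseteq\mathcal{O}'$.
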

\begin{proof}
	It is easily shown that if $\mathcal{O}\subseteq \mathcal{O}'$,
	\begin{align}
		&
		\label{eq. proof systems subseteq}
		\left\{
		\left[
		\bm{A}(\bm{\rho})\bm{B}(\bm{\rho})\bm{B}_w(\bm{\rho})
		\right]
		\right\}
		\subseteq
		\left\{
		\left[
		\bm{A}(\bm{\rho}')\bm{B}(\bm{\rho}')\bm{B}_w(\bm{\rho}')
		\right]
		\right\},
		&
	\end{align}
	for all $\bm{\rho}$ and $\bm{\rho}'$. 
	Let $\Sigma:=\{ [\bm{\tilde{A}}_i\,\bm{\tilde{B}}_i\,\bm{\tilde{B}}_{wi}] \}$ and
	$\Sigma':=\{ [\bm{\tilde{A}}_i'\,\bm{\tilde{B}}_i'\,\bm{\tilde{B}}_{wi}'] \}$ be
	vertex matrices that correspond to $\mathcal{O}$
	and $\mathcal{O}'$, respectively.
	By \eqref{eq. proof systems subseteq} and Corollary \ref{col. 2} it is shown that
	for any $\bm{\rho}\in \mathcal{O}$,
	\begin{align}
	J(\bm{\rho})\leq J_{\text{syn}}',
	\label{eq. proof rho and J_syn'}
	\end{align}
	where $J_{\text{syn}}'$ denotes the bound determined by Problem \ref{pbm. 1} with the vertices $\Sigma'$.
	On the other hand,
	by Corollary \ref{col. 2} there exists $\bm{\rho}_I\in \mathcal{O}$
	s.t. $\bm{\rho}_I$
	corresponds to one of the vertices $\Sigma$
	and  
	\begin{align}
	J(\bm{\rho}_I)=J_{\text{syn}},
	\label{eq. proof J_rho and J_syn}
	\end{align}
	where $J_{\text{syn}}$ denotes the bound determined by Problem \ref{pbm. 1} with the vertices $\Sigma$.
	Substituting $\bm{\rho}=\bm{\rho}_I$ into
	\eqref{eq. proof rho and J_syn'}, 
	\begin{align}
	J_{\text{syn}}=J(\bm{\rho}_I)\leq J_{\text{syn}}'
	\ \Leftrightarrow\
	J_{\text{syn}}\leq J_{\text{syn}}'.
	\end{align}
\end{proof}

Theorem 2 states that the \textit{worst case} $\mathcal{H}_2$ norm,
which corresponds to $J_{\text{syn}}$,
becomes greater if the model uncertainty becomes greater. 
On the other hand,
it is emphasized
that there may be a gap between $J_{\text{syn}}$ and $J(\bm{\rho})$, i.e., the worst case $\mathcal{H}_2$ norm
and the \textit{actual} $\mathcal{H}_2$ norm.

\section{NUMERICAL EXAMPLES}
\label{sec4}
In this section we provide simulation results of the proposed controller synthesis applied to nonlinear systems.
In addition to the proposed controller we design two other controllers for comparison. One is the LQR regulator designed
for a single data-driven model \eqref{eq. linear predictor 2},
and the other is the nominal $\mathcal{H}_2$ feedback controller,
which we design for a single data-driven model \eqref{eq. linear predictor 2} by solving the following problem:
\begin{pbm}
	(Nominal Controller Synthesis)
	\begin{align}
	&
	\text{inf.    }\ \ \text{tr}(\bm{W})
	\ \text{  subject to }
	&\nonumber
	\\
	&
	\bm{M}_1(\bm{P},\bm{W},\bm{X},\bm{L};\
	\bm{C}_z, \bm{D}_{zu}
	)>0,
	&\nonumber
	\\
	&
	\bm{M}_2(\bm{P},\bm{X},\bm{L};\ 
	\bm{A}(\bm{\rho}),\bm{B}(\bm{\rho}),\bm{B}_{w}(\bm{\rho}))>0
	.
	&
	\label{eq. LMIs 2}
	\end{align}
	\label{pbm. 2}
\end{pbm}
\vspace{-10mm}

\subsection{Duffing Oscillator}
\label{sec. duffing}
\begin{figure}[b]
	\centering
	\includegraphics[width=\columnwidth]{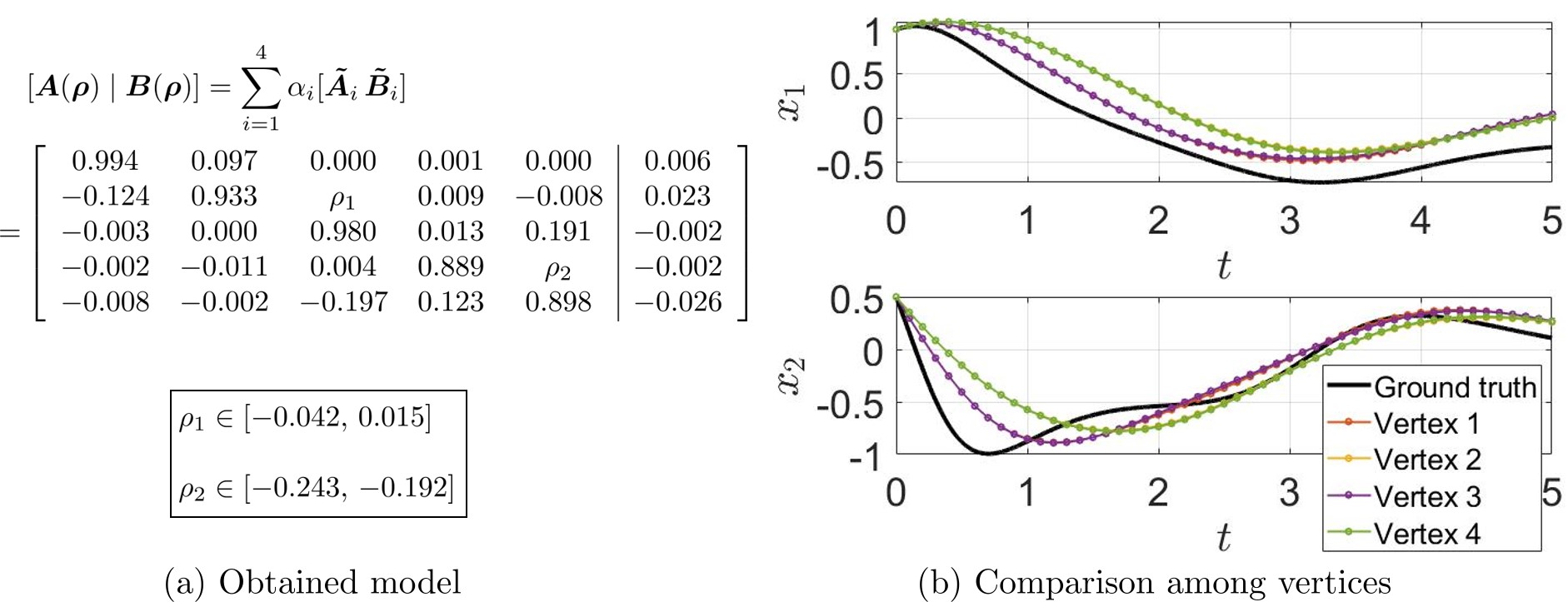}
	\caption{Data-driven model of the duffing oscillator.}
	\label{fig. duffing 1}
\end{figure}
As the first example we consider the forced duffing oscillator:
\begin{align}
	\ddot{x}+0.5\dot{x}-x+4x^3=u(t),
\end{align}
where $u(t)$ represents the input to the system.
We assume that the trajectory of the states $\bm{x}=[x_1\ x_2]^\tr$ ($x_1:=x,\ x_2:=\dot{x}$) is available as data while we have no knowledge about the governing equation. 
In the simulation, measurement noise 
$w \sim \mathcal{N}(w \mid 0,0.01)$ is added to the states.
We specify the observable functions as monomials up to the second order: 
\begin{align}
	\bm{g}(\bm{x}) = 
	[
		x_1\ \ x_2\ \ x_1^2\ \ x_2^2\ \ x_1x_2
	]^\tr.
\end{align}

For the proposed controller synthesis we collect four data sets $\{ D_l \}_{l=1}^4$
with the sampling interval $\Delta t=0.1$.
Each data set consists of 150 pairs $(\bm{F}(\bm{x},u),\bm{x},u)$
of one-step trajectories, where $\bm{x}$ and $u$ are sampled from
the uniform distribution over $[-1,1]$.
In order to stabilize the states while saving the input energy we define the controlled output as
$\bm{z}=
\bm{C}_z
\bm{g}(\bm{x})
+
\bm{D}_{zu} u
=
[10x_1\ x_2\ u]^\tr 
$, where
\begin{align}
\bm{C}_z=
\left[
\begin{array}{c}
\hspace{-4mm}
\begin{array}{cc}
\begin{array}{cc}
10&0
\\
0&1
\end{array}
&
\bm{0}_{2,3}
\hspace{-3mm}
\end{array}
\\
\bm{0}_{1,5}
\hspace{-6mm}
\end{array}
\right],
\bm{D}_{zu}=
\left[
\begin{array}{c}
0
\\
0
\\
1
\end{array}
\right].
\end{align}
Figure \ref{fig. duffing 1} shows the data-driven model obtained
by the proposed method and the comparison of the prediction errors among
the vertices of the polytope. Note that we ignored the dependence of $\bm{\rho}$
on $\bm{B}(\bm{\rho})$ since all the variations of $\bm{B}(\bm{\rho})$
were negligibly small, and we set the threshold number $h=2$ for $\bm{A}(\bm{\rho})$.

The LQR regulator and the nominal $\mathcal{H}_2$ controller
are designed with the single data set $D_1$.
We select the weight matrices for the LQR regulator so that the cost function:
\begin{align}
J_{LQR}=
\sum_{k=1}^\infty 
\left(
\bm{g}(\bm{x}_k)^\tr 
\bm{Q}
\bm{g}(\bm{x}_k)
+
\bm{u}_k^\tr 
\bm{R}
\bm{u}_k
\right),
\end{align}
is equivalent to
the $\mathcal{H}_2$ norm, i.e., we set
\begin{align}
	\bm{Q}=
	\left[
	\begin{array}{c}
	\hspace{-4mm}
	\begin{array}{cc}
	\begin{array}{cc}
	100&0
	\\
	0&1
	\end{array}
	&
	\bm{0}_{2,3}
	\hspace{-3mm}
	\end{array}
	\\
	\bm{0}_{1,5}
	\hspace{-6mm}
	\end{array}
	\right],
	\ \ 
	R=1.
\end{align}

The simulation result is shown in Fig. \ref{fig. duffing 2}.
It is observed that
the proposed robust $\mathcal{H}_2$ controller effectively attenuates the peak of $x_1$ compared to the LQR and the nominal $\mathcal{H}_2$ controllers. As a result
the $l_2$ norm of the proposed controller is smaller than other two controllers (Fig.~\ref{fig. duffing 2} (d)).
This result suggests that the proposed controller successfully dealt with the model uncertainty caused by the data-driven modeling procedure
described in Section \ref{sec2}, while other two controllers
lower the performance in the condition 2 due to this model uncertainty.
\begin{figure}[t]
	\centering
	\includegraphics[width=\columnwidth]{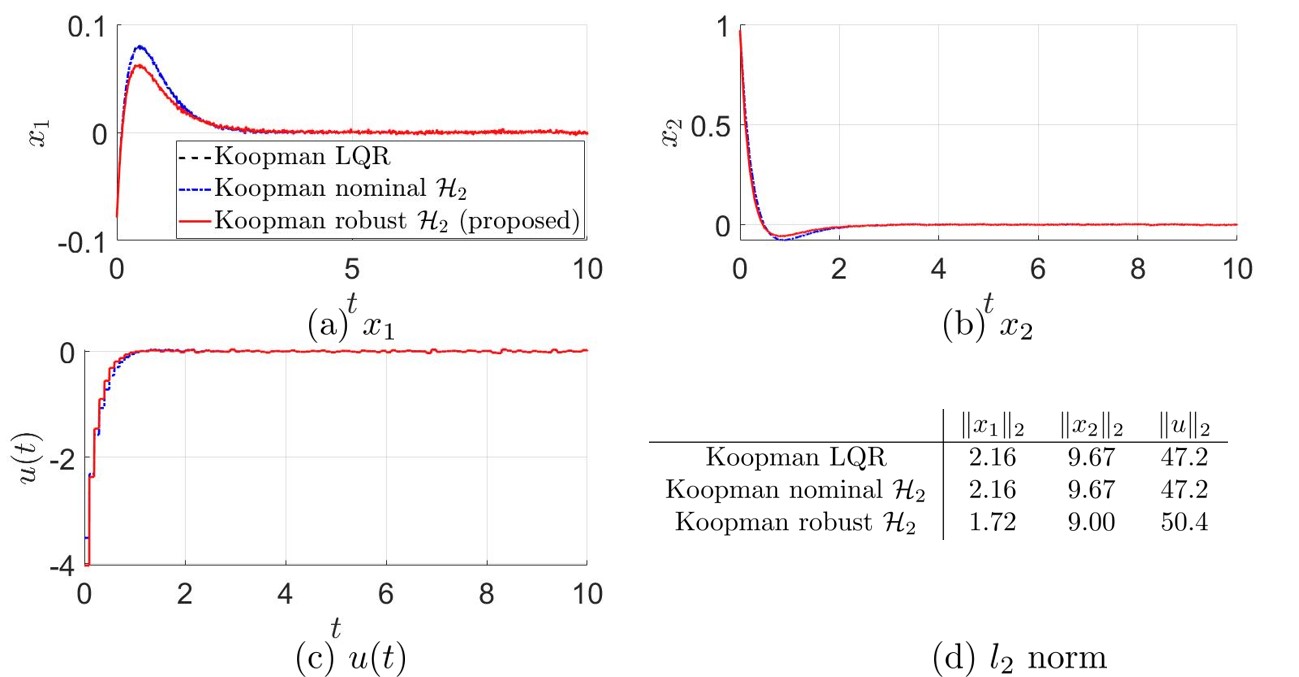}
	\caption{Result of the duffing oscillator with the initial condition: $(x_1(0),x_2(0))=(-0.08,0.97)$.}
	\label{fig. duffing 2}
\end{figure}

\subsection{Shallow-Water Waves Control}
As another example we investigate the controller performance with
the Korteweg-de Vries (KdV) equation, which models the shallow-water waves:
\begin{align}
	\partial_t y(t,x) + y(t,x)\partial_x y(t,x) + \partial_x^3 y(t,x)=u(t,x).
\end{align}

In this example the input 
$\bm{u}_k\hspace{-1mm}=\hspace{-1mm}[u_{1,k} u_{2,k} u_{3,k}]^\tr$
has a structure s.t. 
$u(t,x)=\sum_{i=1}^{3}u_{i,k}
\left\{
U(t,t)-U(t,\Delta t)
\right\}
v_i(x)$,
where
$v_i(x)=e^{ 25\left( x-c_i \right)^2 }$,
$c_1=-\frac{\pi}{2}$, $c_2=0$ $c_3=\frac{\pi}{2}$,
and $U(t,a)$ denotes the step function.
\begin{figure}[b]
	\centering
	\includegraphics[width=\columnwidth]{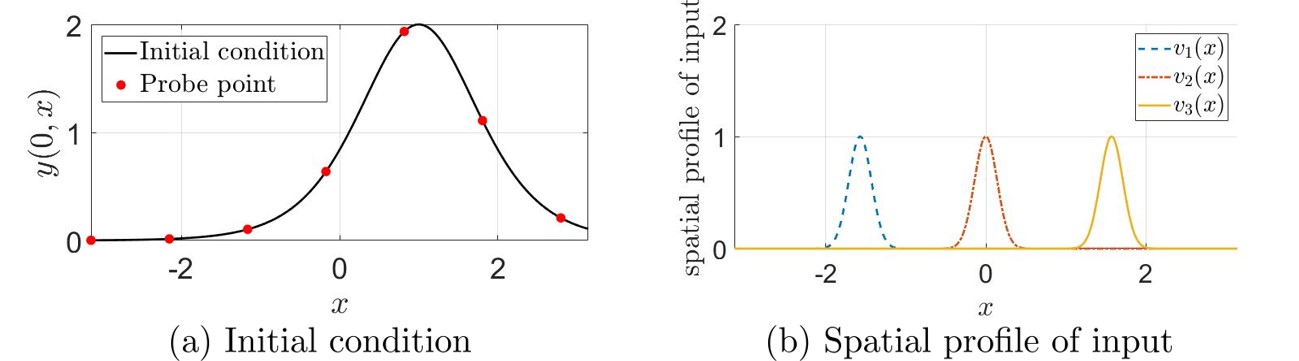}
	\caption{Initial condition and the spatial profile of input.}
	\label{fig. kdv ini}
\end{figure}
\begin{figure}[t]
	\centering
	\includegraphics[width=\columnwidth]{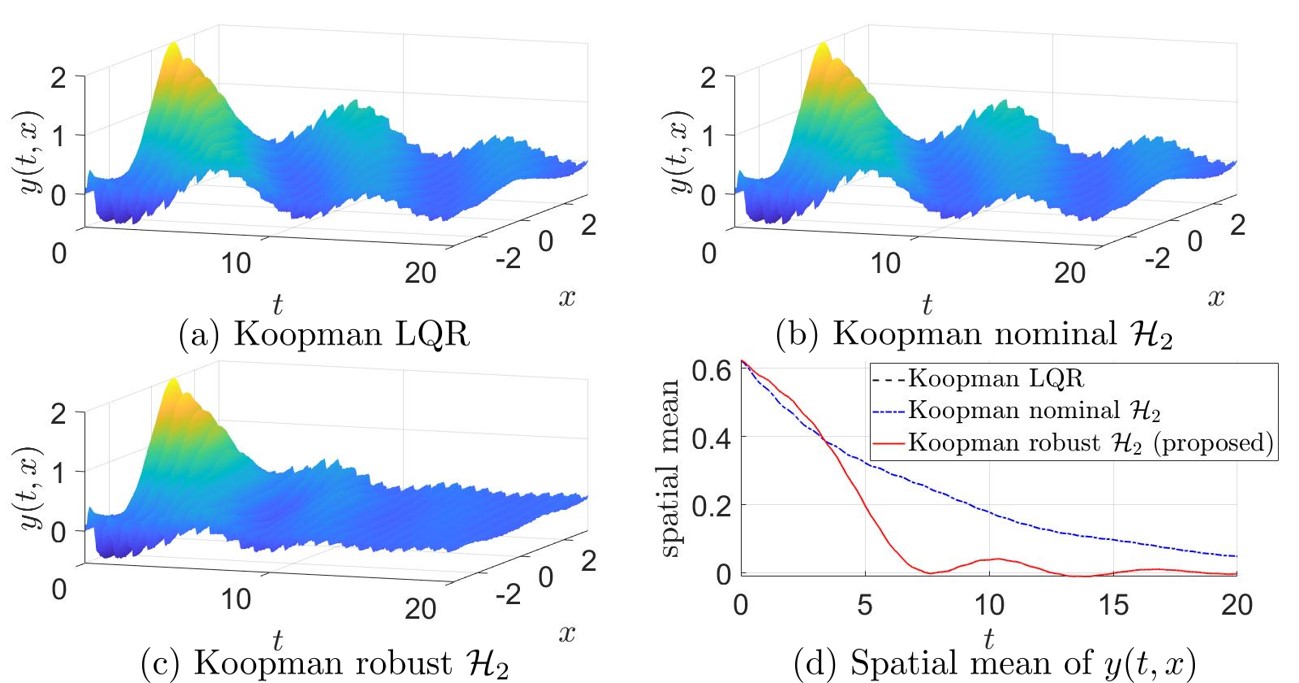}
	\caption{Result of the KdV equation.}
	\label{fig. kdv y}
\end{figure}

In the simulation,
the equation is discretized by the split-stepping method, 
which generates the state space of 128 dimensions with the
time discretization of 0.01 seconds.
With the sampling interval $\Delta t \hspace{-1mm}=\hspace{-1mm}0.01$ we measure the water surface $y(t,x)$ at seven probe points that are shown in Fig.~\ref{fig. kdv ini}~(a), and 
use them as observable functions, which are referred to as $\bm{x}_k$.
We collect four data sets, each of which consists of 100
trajectories with a length of 200 steps,
where
the initial conditions are specified by random convex combinations of three spatial
profiles:
$e^{-(x-\pi/2)^2}$, $-\sin(x/2)^2$, and $e^{-(x+\pi/2)^2}$.
The controlled output is set as
$
\bm{z}_k=
\bm{C}_z \bm{g}(\bm{x}_k)
+
\bm{D}_{zu} \bm{u}_k
=[\bm{x}_k^\tr\ \bm{u}_k^\tr ]^\tr 
$, where
\begin{align}
	\bm{C}_z=[\bm{I}_7\ \bm{0}_{7,3}]^\tr,\ \ 
	\bm{D}_{zu}=[\bm{0}_{3,7}\ \bm{I}_3 ]^\tr.
\end{align}

\begin{figure}[t]
	\centering
	\includegraphics[width=\columnwidth]{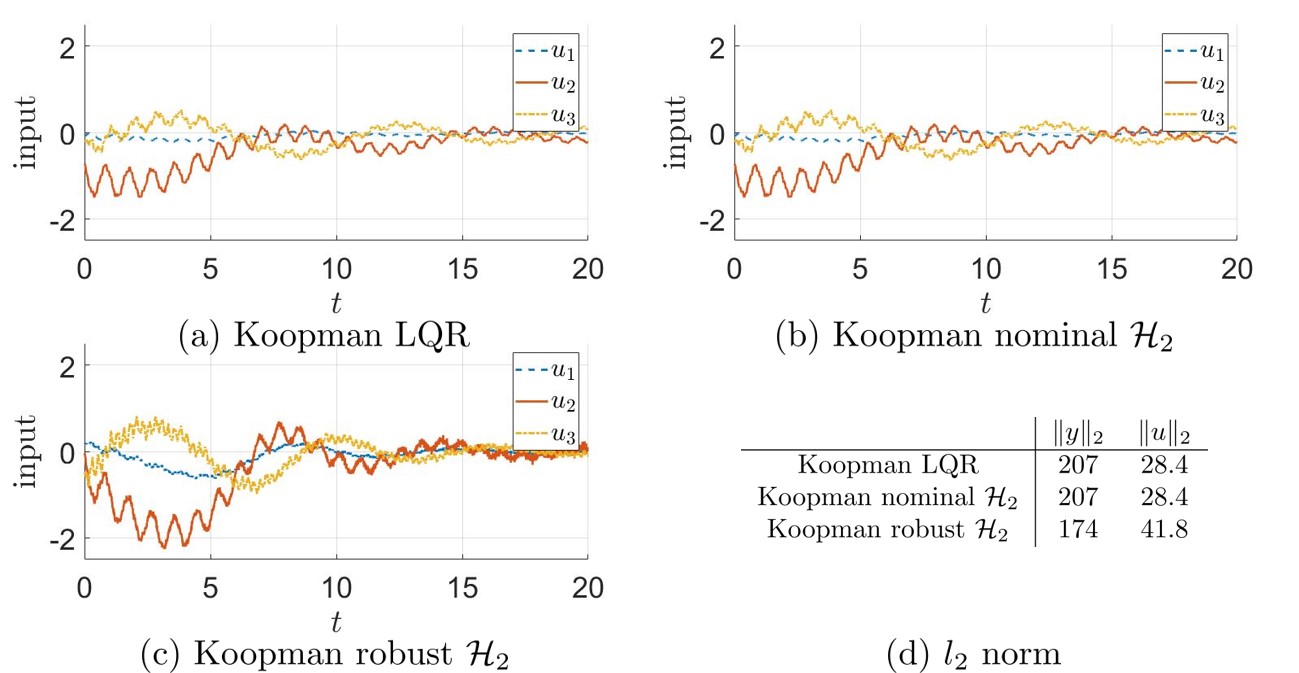}
	\caption{Input obtained by the controllers.}
	\label{fig. kdv u}
\end{figure}

\vspace{-0.5mm}
We set the threshold number $h=2$ for $\bm{A}(\bm{\rho})$,
and ignored the dependence of $\bm{\rho}$ on $\bm{B}(\bm{\rho})$
since the variations of entries were negligibly small.
The weight matrices for the LQR controller are defined as
$	\bm{Q}=\bm{I}_7,\   \bm{R}=\bm{I}_3,$
so that the cost function
$J_{LQR}$ is equivalent to the $\mathcal{H}_2$ norm.

The result is shown Figs.~\ref{fig. kdv y} and \ref{fig. kdv u}.
It is shown that the proposed controller quickly 
stabilized the states
(Fig.~\ref{fig. kdv y}~(c)) while other two controllers had difficulty
regulating the system (Fig.~\ref{fig. kdv y} (a),(b)).

\section{CONCLUSIONS}
This paper presented a data-driven control synthesis that combines
the data-driven Koopman operator theory and the $\mathcal{H}_2$ characterization
of linear systems. In order to deal with the model 
uncertainty due to the nature of the data-driven modeling, the proposed method models the system as a polytope set, which is then utilized to design a robust feedback controller
on the basis of the $\mathcal{H}_2$ norm characterization. 
Future directions of research include the introduction
of statistical properties to the proposed data-driven model.

\bibliographystyle{IEEEtran}  
\bibliography{IEEEabrv, ACC2021_UCHIDA}

\end{document}